\numberwithin{equation}{section}
\newtheorem{thm}    {Theorem}
\newtheorem{lem}      [thm] {Lemma}
\newtheorem{con} [thm] {Conjecture}
\newtheorem{exm}    [thm] {Example}
\setlist[enumerate,1]{label=(\roman*)}
\DeclareMathOperator{\N}{\mathbb{N}}
\DeclareMathOperator{\Q}{\mathbb{Q}}
\begin{document}

\title{On Means of Near Continued Fractions}

\author[T. J. ~Spier]{Thom\'as Jung Spier\,\orcidlink{0000-0003-4822-3046}}
\address{Department of Computer Science, Federal University of Minas Gerais, Belo Horizonte, Minas Gerais, Brazil}
\email{thomasjung@dcc.ufmg.br}


\begin{abstract}
In this work, we present continued fractions for the arithmetic, geometric, harmonic and cotangent means of $[a_0,a_1,\dots,a_k]$ and $[a_0,a_1,\dots,a_k,a_{k+1}]$, and some of their applications.
\end{abstract}


\keywords{continued fraction ; Pell equation ; factorization ; geometric mean}
\makeatletter
\@namedef{subjclassname@2020}{\textup{2020} Mathematics Subject Classification}
\makeatother
\subjclass[2020]{11J70, 11D09}


\clearpage\maketitle
\thispagestyle{empty}

\section{Introduction}\label{sec.intro}

In this work continued fractions will be denoted by,

\[[a_0,a_1,a_2,\dots,a_j,\dots]:=a_0+\dfrac{1}{a_1+\dfrac{1}{a_2+\dfrac{1}{\dots+\dfrac{1}{a_j+\dots}}}}.\] 

Two finite continued fractions are said to be {\it near} if one of them has one more term than the other. A pair of near continued fractions is represented by $[a_0,a_1,\dots,a_k]$ and $[a_0,a_1,\dots,a_k,a_{k+1}]$. In particular, two consecutive convergents of an irrational number are near continued fractions.

Our main theorem gives formulas for the \textit{arithmetic}, \textit{geometric}, \textit{harmonic} and \textit{cotangent means} of near continued fractions. This theorem must certainly be known, but unfortunately we do not find it in the literature and we do not know who was the first to prove it.

\begin{thm}\label{gmcf}(Means of near continued fractions) Let $a_0,a_1,\dots,a_{k+1}$ be positive real numbers. Then,
	
\[\dfrac{[a_0,a_1,\dots,a_k]+ [a_0,a_1,\dots,a_k,a_{k+1}]}{2}=[a_0,a_1,\dots,a_k,2a_{k+1},a_k,\dots,a_1],\]
	
\[\sqrt{[a_0,a_1,\dots,a_k]\cdot [a_0,a_1,\dots,a_k,a_{k+1}]}=[a_0,\overline{a_1,\dots,a_k,2a_{k+1},a_k,\dots,a_1,2a_0}],\]

\[\dfrac{2}{\dfrac{1}{[a_0,a_1,\dots,a_k]}+\dfrac{1}{ [a_0,a_1,\dots,a_k,a_{k+1}]}}=[a_0,a_1,\dots,a_k,2a_{k+1},a_k,\dots,a_1,a_0],\]

\[\cot\left(\dfrac{\cot^{-1}[a_0,a_1,\dots,a_k]+ \cot^{-1}[a_0,a_1,\dots,a_k,a_{k+1}]}{2}\right)=\]

\[=[\overline{a_0,a_1,\dots,a_k,2a_{k+1},a_k,\dots,a_1,a_0}].\]

\end{thm}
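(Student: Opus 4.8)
The plan is to reduce the cotangent identity to matching two quadratic equations. Encode each partial quotient by the symmetric transfer matrix $A_i=\left(\begin{smallmatrix}a_i&1\\1&0\end{smallmatrix}\right)$, whose associated Möbius transformation is $z\mapsto a_i+1/z$; then the product $A_0A_1\cdots A_k$ represents the composition sending $z$ to the continued fraction with last term $a_k+1/z$. Writing $L=A_0A_1\cdots A_k=\left(\begin{smallmatrix}P_k&P_{k-1}\\Q_k&Q_{k-1}\end{smallmatrix}\right)$ in convergent notation, we have $p:=[a_0,\dots,a_k]=P_k/Q_k$ and $q:=[a_0,\dots,a_k,a_{k+1}]=(a_{k+1}P_k+P_{k-1})/(a_{k+1}Q_k+Q_{k-1})$. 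The structural observation driving everything is that the word $a_0,a_1,\dots,a_k,2a_{k+1},a_k,\dots,a_1,a_0$ is a palindrome: since each $A_i$ is symmetric, its transfer matrix factors as $M=L\,B\,L^{\mathsf{T}}$ with $B=\left(\begin{smallmatrix}2a_{k+1}&1\\1&0\end{smallmatrix}\right)$, so $M$ is symmetric and $M_{12}=M_{21}$.

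Next I would write down the two quadratics. On the continued-fraction side, the purely periodic value $x=[\overline{a_0,\dots,a_k,2a_{k+1},a_k,\dots,a_1,a_0}]$ is the fixed point of the Möbius map attached to $M$, hence $M_{21}x^2+(M_{22}-M_{11})x-M_{12}=0$, which by symmetry reads $\dfrac{x^2-1}{x}=\dfrac{M_{11}-M_{22}}{M_{21}}$. On the trigonometric side, set $\alpha=\cot^{-1}p$, $\beta=\cot^{-1}q$ and $c=\cot\frac{\alpha+\beta}{2}$. The cotangent double-angle identity $\cot 2\gamma=\frac{\cot^2\gamma-1}{2\cot\gamma}$ applied at $\gamma=\frac{\alpha+\beta}{2}$, together with $\cot(\alpha+\beta)=\frac{pq-1}{p+q}$, yields $\dfrac{c^2-1}{c}=\dfrac{2(pq-1)}{p+q}$.

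It then remains to check that these two quadratics coincide, i.e. that $\dfrac{M_{11}-M_{22}}{M_{21}}=\dfrac{2(pq-1)}{p+q}$. I expect this to be the technical heart, but the palindromic factorization makes it fall out cleanly: expanding $M=L\,B\,L^{\mathsf{T}}$ gives $M_{21}=2a_{k+1}P_kQ_k+P_{k-1}Q_k+P_kQ_{k-1}$ and $M_{11}-M_{22}=2\bigl[a_{k+1}(P_k^2-Q_k^2)+(P_kP_{k-1}-Q_kQ_{k-1})\bigr]$, while clearing denominators in $p+q$ and in $pq-1$ produces exactly these same two expressions over the common denominator $Q_k(a_{k+1}Q_k+Q_{k-1})$. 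Hence $c$ and $x$ satisfy the identical quadratic $y^2-Ky-1=0$.

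Finally I would pin down the root. Since all $a_i>0$ we have $p,q>0$, so $\alpha,\beta\in(0,\tfrac{\pi}{2})$ and $\frac{\alpha+\beta}{2}\in(0,\tfrac{\pi}{2})$, forcing $c>0$; likewise $x>0$ as a continued fraction with positive entries. The common quadratic has product of roots $-M_{12}/M_{21}=-1<0$, so it admits a single positive root, which must therefore equal both $c$ and $x$. This gives $c=x$, the claimed cotangent-mean identity. The same transfer-matrix bookkeeping, applied to the non-palindromic words governing the other three means, handles the remaining parts of the theorem.
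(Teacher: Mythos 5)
Your argument is correct, and at its core it is the paper's proof in transfer-matrix clothing. The paper's engine is Lemma \ref{propcontsym}: closed forms for the continuants of the symmetric words $a_0,\dots,a_k,2a_{k+1},a_k,\dots$, obtained by splitting the continuant at the middle entry via Lemma \ref{propcont}(b)--(c). Your factorization $M=LBL^{\mathsf{T}}$ packages exactly those identities --- the entries of $M$ and of $LB(A_1\cdots A_k)^{\mathsf{T}}$ are precisely the symmetric continuants of Lemma \ref{propcontsym}(a)--(b) --- and you get the crucial symmetry $M_{12}=M_{21}$ for free from the symmetry of each $A_i$, where the paper invokes reversal-invariance of continuants (Lemma \ref{propcont}(a)). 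From there the two proofs coincide: the purely periodic value satisfies the fixed-point quadratic, the cotangent addition/double-angle formulas produce the same quadratic for $c=\cot\tfrac{\alpha+\beta}{2}$, and your matching $M_{21}=P_kQ_{k+1}+P_{k+1}Q_k$, $M_{11}-M_{22}=2(P_kP_{k+1}-Q_kQ_{k+1})$ checks out. In one respect you are more careful than the paper: the paper passes from the quadratic to $2\cot^{-1}x=\cot^{-1}p+\cot^{-1}q$ with a bare ``$\iff$'', whereas your remark that $y^2-Ky-1=0$ has root product $-1$, hence a unique positive root, cleanly forces $x=c$. What you leave implicit is the other three means; they do follow from the same bookkeeping (e.g.\ the word $a_0,\dots,a_k,2a_{k+1},a_k,\dots,a_1$ has matrix $LB(A_1\cdots A_k)^{\mathsf{T}}$ with $(1,1)$ and $(2,1)$ entries $P_kQ_{k+1}+P_{k+1}Q_k$ and $2Q_kQ_{k+1}$, giving the arithmetic mean at once, and the harmonic mean is the arithmetic mean of the reciprocals obtained by prepending $a_{-1}=0$), but the geometric mean deserves a sentence of its own: there the periodic word ends in $2a_0$ and is not a palindrome, so the cancellation of the linear term in the fixed-point quadratic comes from writing $x=[a_0,\dots,a_1,a_0+x]$ and using $M=M'A_0$ together with $M_{12}=M_{21}$, rather than from symmetry alone.
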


Theorem~\ref{gmcf} also admits a complex analogue.

\begin{thm}\label{gmcfcomplex}(Means of near complex continued fractions) Let $a_0,a_1,\dots,a_{k+1}$ be positive real numbers. Then,
	
	\[\dfrac{[a_0,a_1,\dots,a_k,a_{k+1}-i]+ [a_0,a_1,\dots,a_k,a_{k+1}+i]}{2}=\]
	
	\[=[a_0,a_1,\dots,a_k,a_{k+1},a_{k+1},a_k,\dots,a_1],\]
	
	\[\sqrt{[a_0,a_1,\dots,a_k,a_{k+1}-i]\cdot [a_0,a_1,\dots,a_k,a_{k+1}+i]}=\]
	
	\[=[a_0,\overline{a_1,\dots,a_k,a_{k+1},a_{k+1},a_k,\dots,a_1,2a_0}],\]
	
	\[\dfrac{2}{\dfrac{1}{[a_0,a_1,\dots,a_k,a_{k+1}-i]}+\dfrac{1}{ [a_0,a_1,\dots,a_k,a_{k+1}+i]}}=\]
	
	\[=[a_0,a_1,\dots,a_k,a_{k+1},a_{k+1},a_k,\dots,a_1,a_0],\]
	
	\[\cot\left(\dfrac{\cot^{-1}[a_0,a_1,\dots,a_k,a_{k+1}-i]+ \cot^{-1}[a_0,a_1,\dots,a_k,a_{k+1}+i]}{2}\right)=\]
	
	\[=[\overline{a_0,a_1,\dots,a_k,a_{k+1},a_{k+1},a_k,\dots,a_1,a_0}].\]
	
\end{thm}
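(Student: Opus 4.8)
The plan is to translate everything into $2\times 2$ matrices. To a finite continued fraction $[b_0,\dots,b_n]$ I associate the product $B_0B_1\cdots B_n$, where $B_j=\left(\begin{smallmatrix} b_j & 1\\ 1 & 0\end{smallmatrix}\right)$, so that the value of the fraction is the ratio of the two entries in the first column of this product. Since each $B_j$ is symmetric, transposing a product reverses the order of its factors; this is exactly what will let me recognise the reversed tails $a_{k+1},a_k,\dots,a_1$ on the right-hand sides. Writing $M=A_0\cdots A_k$ and $N=A_0\cdots A_{k+1}=\left(\begin{smallmatrix} u & v\\ s & t\end{smallmatrix}\right)$, a direct Möbius computation gives $[a_0,\dots,a_k,a_{k+1}\mp i]=\tfrac{u\mp vi}{s\mp ti}$, so the two near fractions are the complex conjugates $\alpha=\tfrac{u-vi}{s-ti}$ and $\bar\alpha$. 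The central object is the symmetric matrix
\[
NN^{T}=\begin{pmatrix} u^2+v^2 & us+vt\\ us+vt & s^2+t^2\end{pmatrix},
\]
from which one reads off $\mathrm{Re}\,\alpha=\tfrac{us+vt}{s^2+t^2}$, $|\alpha|^2=\tfrac{u^2+v^2}{s^2+t^2}$, and $\mathrm{Im}\,\alpha=\tfrac{\pm1}{s^2+t^2}$ (the sign being $(-1)^k$, from $\det M$).

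First I would dispatch the harmonic and arithmetic means, which require no new ideas. The harmonic mean equals $\tfrac{|\alpha|^2}{\mathrm{Re}\,\alpha}=\tfrac{u^2+v^2}{us+vt}$, which is precisely the ratio of the first column of $NN^{T}$; and the right-hand side $[a_0,\dots,a_{k+1},a_{k+1},\dots,a_1,a_0]$ is the palindrome whose matrix is $A_0(A_1\cdots A_{k+1})(A_{k+1}\cdots A_1)A_0=NN^{T}$, settling the third identity. For the arithmetic mean, $\mathrm{Re}\,\alpha=\tfrac{us+vt}{s^2+t^2}$ is the ratio of the first column of $NN^{T}A_0^{-1}=A_0(A_1\cdots A_{k+1})(A_{k+1}\cdots A_1)$, which is exactly the matrix of the non-palindromic right-hand side $[a_0,\dots,a_{k+1},a_{k+1},\dots,a_1]$.

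The two periodic identities are handled by fixed-point equations. For the geometric mean, write $g$ for the value and $w=1/(g-a_0)$ for its purely periodic tail; since the period ends in $2a_0$ and $2a_0+1/w=a_0+g$, periodicity collapses to the single equation $g=[a_0,\dots,a_{k+1},a_{k+1},\dots,a_1,(a_0+g)]$. Feeding $a_0+g$ into the matrix $A_0(A_1\cdots A_{k+1})(A_{k+1}\cdots A_1)$ and simplifying yields $(s^2+t^2)g^2=u^2+v^2$, whose positive root is $|\alpha|=\sqrt{\alpha\bar\alpha}$. For the cotangent mean I would first apply the addition and half-angle formulas for $\cot^{-1}$: with $C=\tfrac{|\alpha|^2-1}{2\,\mathrm{Re}\,\alpha}$ one gets $\cot\!\big(\tfrac12(\cot^{-1}\alpha+\cot^{-1}\bar\alpha)\big)=C+\sqrt{1+C^2}$, and substituting the expressions above shows this is the positive fixed point of the Möbius map attached to $NN^{T}$, i.e. the purely periodic $[\overline{a_0,\dots,a_{k+1},a_{k+1},\dots,a_1,a_0}]$. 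It is pleasant that the same matrix $NN^{T}$ governs both the harmonic mean (read as a finite fraction) and the cotangent mean (read as its periodic fixed point).

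The step I expect to be genuinely delicate is the cotangent mean, for two reasons. First, $\cot^{-1}$ is applied to the complex numbers $\alpha$ and $\bar\alpha$, so I must justify that $\cot^{-1}\bar\alpha=\overline{\cot^{-1}\alpha}$ on a consistent branch, that the half-sum is therefore real, and that the half-angle identity is used with the correct sign of the square root; the parity-dependent sign of $\mathrm{Im}\,\alpha$ enters exactly here. Second, for both periodic fractions I must confirm that the convergents actually converge to the intended root rather than its conjugate, which reduces to positivity (all $a_i>0$) and to checking that the chosen fixed point is the attracting one. The purely algebraic identities for the arithmetic and harmonic means, by contrast, should be immediate once the matrix dictionary is in place.
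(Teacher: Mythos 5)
Your proposal is correct and is essentially the paper's own argument translated into $2\times 2$ matrix notation: the entries of $A_0\cdots A_n$ are precisely the continuant polynomials, so the matrix $NN^{T}$ and its truncations encode exactly the palindromic continuant identities of Lemma \ref{propcontsym} (items c and d) on which the paper relies, and your fixed-point equations for the two periodic fractions together with the cotangent addition/half-angle step mirror the paper's treatment of the corresponding cases of Theorem \ref{gmcf}. The only (inconsequential) slip is that the sign $(-1)^k$ of $\operatorname{Im}\alpha$ comes from $\det N$, not $\det M$.
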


The Theorems~\ref{gmcf} and~\ref{gmcfcomplex} are proved using only algebraic manipulations of continuant polynomials, which are numerators and denominators of continued fractions. We note that the positive real number condition in both statements is just to ensure convergence and that the objects are well defined, but the results apply to other types of continued fraction entries and notions of convergence.

The \textit{geometric mean} formula in Theorem~\ref{gmcf} is essentially contained in an article by Van der Poorten and Walsh~\cite[p. 52, Thm. 1]{van1999note}, but it is written completely differently. The \textit{arithmetic mean} formula in Theorem~\ref{gmcf} should also be compared with a result that appears in Mend\`es France work~\cite{france1973fractions} that was further explored by Van der Poorten and Shallit~\cite[p. 239, Prop. 2]{van1992folded} and~\cite[p. 604, Prop. 3]{van2002symmetry}. The result in these three articles is analogous to the \textit{arithmetic mean} formula in Theorem~\ref{gmcf}, but with some changed signs, and is used to transform some series into continued fractions.

In Section~\ref{sec.continuant.pol} we define the continuant polynomials, present some of their properties, and use them to prove Theorems~\ref{gmcf} and~\ref{gmcfcomplex}. In Sections~\ref{pell-factor} and~\ref{mordell}, we reinterpret some classical results about Pell's equation in terms of Theorems~\ref{gmcf} and~\ref{gmcfcomplex}.


\section{Continuant Polynomials}\label{sec.continuant.pol}

We begin by recalling Euler's \textit{continuant} polynomials \cite{euler1744fractionibus} (or see~\cite[p. 287]{graham1989concrete}). Define recursively, 

\[K[\,\,]=1,\quad K[a_0]=a_0,\quad K[a_0,a_1]=a_0a_1+1,\]

\noindent and, $K[a_0,\dots,a_n]=a_n\cdot K[a_0,\dots,a_{n-1}]+K[a_0,\dots,a_{n-2}], \,\forall n\geq 2$. Note that $K[a_0,\dots,a_n]$ is a multilinear polynomial in the variables $a_0$, $a_1$, $\dots$, $a_n$.

The \textit{continuant} polynomials are the numerators and denominators of continued fractions:

\[[a_0,a_1,\dots,a_n]=\dfrac{K[a_0,a_1,\dots,a_n]}{K[a_1,\dots,a_n]},\,\quad\forall n\in \N.\]

The next two lemmas present the main properties of continuants that will be needed later.

\begin{lem}[Properties of continuants]\label{propcont} Let $n$ and $j$ be two natural numbers with $j<n$. Then,
\begin{enumerate}[label=(\alph*)]
	\item $K[a_0,a_1,\dots,a_{n-1},a_n]=K[a_n,a_{n-1},\dots,a_1,a_0]$;
	
	\item $K[a_0,\dots,a_{j-1},a_j,a_{j+1},\dots,a_n]=a_j\cdot K[a_0,\dots,a_{j-1}]\cdot K[a_{j+1},\dots,a_n]$
	
	$+K[a_0,\dots,a_{j-2}]\cdot K[a_{j+1},\dots,a_n]+K[a_0,\dots,a_{j-1}]\cdot K[a_{j+2},\dots,a_n]$;
	
	\item $K[a_0,\dots,a_j,a_{j+1},\dots,a_n]=$
		
	$=K[a_0,\dots,a_j]\cdot K[a_{j+1},\dots,a_n]+K[a_0,\dots,a_{j-1}]\cdot K[a_{j+2},\dots,a_n]$;
	
	\item $K[a_0,\dots,a_n]\cdot K[a_1,\dots,a_{n+1}]-K[a_1,\dots,a_n]\cdot K[a_0,\dots,a_{n+1}]=(-1)^{n+1}$.
\end{enumerate}
\end{lem}
\begin{proof} All items can be proved by induction on the number of entries in the continuant polynomials. For reference, see~\cite[p. 289, Eq. 6.131--6.134]{graham1989concrete}.
\end{proof}

\begin{lem}[Properties of symmetric continuants]\label{propcontsym} Let $k$ be a natural number. Then,
\begin{enumerate}[label=(\alph*)]
	\item $K[a_0,\dots,a_k,2a_{k+1},a_k,\dots,a_0]=2\cdot K[a_0,\dots,a_k]\cdot K[a_0,\dots,a_k,a_{k+1}];$
	\item $K[a_0,a_1,\dots,a_k,2a_{k+1},a_k,\dots,a_1]=$
		
	$=K[a_0,\dots,a_k]\cdot K[a_1,\dots,a_{k+1}]+K[a_0,\dots,a_{k+1}]\cdot K[a_1,\dots,a_k];$
	\item $K[a_0,\dots,a_{k+1},a_{k+1},\dots,a_0]=K[a_0,\dots,a_{k+1}]^2+K[a_0,\dots,a_k]^2=$
		
	$=K[a_0,\dots,a_k,a_{k+1}-i]\cdot K[a_0,\dots,a_k,a_{k+1}+i];$
	\item $K[a_0,a_1,\dots,a_{k+1},a_{k+1},\dots,a_1]=$
		
	$=K[a_0,\dots,a_{k+1}]\cdot K[a_1,\dots,a_{k+1}]+K[a_0,\dots,a_k]\cdot K[a_1,\dots,a_k].$
\end{enumerate}
\end{lem}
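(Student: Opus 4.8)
The plan is to prove all four identities in Lemma~\ref{propcontsym} by reducing each symmetric continuant to products of shorter continuants, using the splitting formula of Lemma~\ref{propcont}~d) together with the reversal symmetry of Lemma~\ref{propcont}~a). The unifying idea is that a palindromic (or near-palindromic) continuant can be split at its center, and after splitting, the reversal property a) turns ``backwards'' blocks such as $K[a_k,\dots,a_1]$ into the familiar ``forwards'' blocks $K[a_1,\dots,a_k]$. So in every item I would first apply d) to cut the long continuant at the appropriate index, then rewrite each reversed factor via a).

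For item a), I would split $K[a_0,\dots,a_k,2a_{k+1},a_k,\dots,a_0]$ at the central entry $2a_{k+1}$ using item c) of Lemma~\ref{propcont} (which isolates the coefficient of a middle entry) to write it as $2a_{k+1}\cdot K[a_0,\dots,a_k]\cdot K[a_k,\dots,a_0] + K[a_0,\dots,a_k,0,a_k,\dots,a_0]$; then item b) handles the zero-entry term, and reversal symmetry collapses everything into $2\,K[a_0,\dots,a_k]\cdot K[a_0,\dots,a_k,a_{k+1}]$ after recognizing the recursion $a_{k+1}K[a_0,\dots,a_k]+K[a_0,\dots,a_{k-1}]=K[a_0,\dots,a_{k+1}]$. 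For item b), the target is no longer fully palindromic because the final block runs only down to $a_1$, so I would again split at the central $2a_{k+1}$ via Lemma~\ref{propcont}~c), apply reversal a) to the right-hand blocks $K[a_k,\dots,a_1]=K[a_1,\dots,a_k]$ and $K[a_{k-1},\dots,a_1]=K[a_1,\dots,a_{k-1}]$, and reassemble using the recursion to obtain the two-term product formula. Items c) and d) follow the same template with the doubled central entries $a_{k+1},a_{k+1}$: here I would split between the two copies of $a_{k+1}$ using Lemma~\ref{propcont}~d), so that $K[a_0,\dots,a_{k+1},a_{k+1},\dots]$ becomes $K[a_0,\dots,a_{k+1}]\cdot K[a_{k+1},\dots] + K[a_0,\dots,a_k]\cdot K[a_{k+2}\dots]$, then invoke a) to straighten the reversed factors. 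The second equality in item c), namely $K[a_0,\dots,a_{k+1}]^2+K[a_0,\dots,a_k]^2 = K[a_0,\dots,a_k,a_{k+1}-i]\cdot K[a_0,\dots,a_k,a_{k+1}+i]$, I would get by applying Lemma~\ref{propcont}~e) with $x=\mp i$ to expand each complex factor as $K[a_0,\dots,a_{k+1}]\mp i\,K[a_0,\dots,a_k]$ and multiplying out, so the cross terms cancel and the imaginary unit produces the sum of squares.

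I expect the main obstacle to be purely bookkeeping rather than conceptual: keeping the index ranges of the reversed blocks exactly aligned with the forward blocks after each split, particularly distinguishing the genuinely palindromic cases (items a and c, which end in $a_0$) from the truncated cases (items b and d, which end in $a_1$), since the two families produce slightly different off-diagonal terms and it is easy to be off by one entry when tracking which shorter continuant each factor reverses to. A secondary subtlety is ensuring the boundary conventions $K[\,]=1$ are respected when $k$ is small, so that the formulas remain valid in the degenerate cases; I would verify these base cases separately or note that the recursion in Lemma~\ref{propcont} already encodes them.
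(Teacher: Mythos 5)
Your proposal is correct and follows essentially the same route as the paper: items a) and b) are proved by isolating the central entry $2a_{k+1}$ via Lemma~\ref{propcont}~c), handling the zero-entry term with b), straightening reversed blocks with a), and reassembling via the recursion, while items c) and d) use the split d) between the repeated entries together with a) and e) (the latter giving the factorization $K[a_0,\dots,a_k,a_{k+1}\mp i]=K[a_0,\dots,a_{k+1}]\mp i\,K[a_0,\dots,a_k]$). The only blemish is notational (the stray ``$K[a_{k+2}\dots]$'' should resolve to $K[a_k,\dots,a_0]$ after reversal), which does not affect the argument.
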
 
\begin{proof} The proof uses the properties of continuants presented in Lemma~\ref{propcont}.
	
$(a)$ $K[a_0,\dots,a_k,2a_{k+1},a_k,\dots,a_0]=$

$=2a_{k+1}\cdot K[a_0,\dots,a_k]^2+2\cdot K[a_0,\dots,a_{k-1}]\cdot K[a_0,\dots,a_k]=$

$=2\cdot K[a_0,\dots,a_k]\cdot (a_{k+1}\cdot K[a_0,\dots,a_k]+K[a_0,\dots,a_{k-1}])=$

$=2\cdot K[a_0,\dots,a_k]\cdot K[a_0,\dots,a_n,a_{k+1}],$

where the first equality is due to items $(a)$ and $(b)$ of Lemma~\ref{propcont}.

$(b)$ $K[a_0,\dots,a_k,2a_{k+1},a_k,\dots,a_1]=$

$=2a_{k+1}\cdot K[a_0,\dots,a_k]\cdot K[a_1,\dots,a_k]+K[a_0,\dots,a_{k-1}]\cdot K[a_1,\dots,a_k]$

$+K[a_0,\dots,a_k]\cdot K[a_1,\dots,a_{k-1}]=$

$=K[a_0,\dots,a_k]\cdot (a_{k+1}\cdot K[a_1,\dots,a_k]+K[a_1,\dots,a_{k-1}])+$

$K[a_1,\dots,a_k]\cdot (a_{k+1}\cdot K[a_0,\dots,a_k]+K[a_0,\dots,a_{k-1}])=$

$=K[a_0,\dots,a_k]\cdot K[a_1,\dots,a_{k+1}]+K[a_0,\dots,a_{k+1}]\cdot K[a_1,\dots,a_k],$

where the first equality is due to items $(a)$ and $(b)$ of Lemma~\ref{propcont}.

The proofs of items $(c)$ and $(d)$ are analogous and follow from items $(a)$ and $(c)$ of Lemma~\ref{propcont}.
\end{proof}

Theorems~\ref{gmcf} and~\ref{gmcfcomplex} are immediate consequences of Lemmas~\ref{propcont} and~\ref{propcontsym}. 

\begin{proof}[Proof of Theorems~\ref{gmcf} and~\ref{gmcfcomplex}.] We only prove Theorem~\ref{gmcf}, the proof of Theorem~\ref{gmcfcomplex} is analogous and uses items $(c)$ and $(d)$ from Lemma~\ref{propcontsym}.
	
\textbf{Arithmetic mean:} 

\[\dfrac{[a_0,a_1,\dots,a_k]+ [a_0,a_1,\dots,a_k,a_{k+1}]}{2}=\dfrac{\dfrac{K[a_0,a_1,\dots,a_k]}{K[a_1,\dots,a_k]}+ \dfrac{K[a_0,a_1,\dots,a_{k+1}]}{K[a_1,\dots,a_{k+1}]}}{2}=\]

\[=\dfrac{K[a_0,a_1,\dots,a_k]\cdot K[a_1,\dots,a_{k+1}]+K[a_0,a_1,\dots,a_{k+1}]\cdot K[a_1,\dots,a_k]}{2\cdot K[a_1,\dots,a_k]\cdot K[a_1,\dots,a_{k+1}]}=\]

\[=\dfrac{K[a_0,a_1,\dots,a_k,2a_{k+1},a_k,\dots,a_1]}{K[a_1,\dots,a_k,2a_{k+1},a_k,\dots,a_1]}=[a_0,a_1,\dots,a_k,2a_{k+1},a_k,\dots,a_1],\]

\noindent where the third equality is by items $(a)$ and $(b)$ of Lemma~\ref{propcontsym}.

\textbf{Geometric mean:}

Consider $x=[a_0,\overline{a_1,\dots,a_k,2a_{k+1},a_k,\dots,a_1,2a_0}]$, we need to prove that $x$ is equal to $\sqrt{[a_0,a_1,\dots,a_k]\cdot [a_0,a_1,\dots,a_k,a_{k+1}]}$. Note that,

\[x=[a_0,a_1,\dots,a_k,2a_{k+1},a_k,\dots,a_1,a_0+x]=\]

\[=\dfrac{K[a_0,a_1,\dots,a_k,2a_{k+1},a_k,\dots,a_1,a_0+x]}{K[a_1,\dots,a_k,2a_{k+1},a_k,\dots,a_1,a_0+x]}=\]

\[=\dfrac{K[a_0,a_1,\dots,a_k,2a_{k+1},a_k,\dots,a_1]\cdot x+K[a_0,a_1,\dots,a_k,2a_{k+1},a_k,\dots,a_1,a_0]}{K[a_1,\dots,a_k,2a_{k+1},a_k,\dots,a_1]\cdot x+K[a_1,\dots,a_k,2a_{k+1},a_k,\dots,a_1,a_0]},\]

\noindent and by item $(a)$ of Lemma~\ref{propcont} and rearranging the terms,

\[K[a_1,\dots,a_k,2a_{k+1},a_k,\dots,a_1]x^2=K[a_0,a_1,\dots,a_k,2a_{k+1},a_k,\dots,a_1,a_0]\iff\]

\[x=\sqrt{\dfrac{K[a_0,a_1,\dots,a_k,2a_{k+1},a_k,\dots,a_1,a_0]}{K[a_1,\dots,a_k,2a_{k+1},a_k,\dots,a_1]}}=\]
	
\[=\sqrt{\dfrac{2\cdot K[a_0,a_1\dots,a_k]\cdot K[a_0,a_1,\dots,a_k,a_{k+1}]}{2\cdot K[a_1,\dots,a_k] \cdot K[a_1,\dots,a_k,a_{k+1}]}}=\]

\[=\sqrt{[a_0,a_1,\dots,a_k]\cdot [a_0,a_1,\dots,a_k,a_{k+1}]},\]

\noindent where the second equality is by item $(a)$ of Lemma~\ref{propcontsym}. 

\textbf{Harmonic mean:} Simply notice that,

\[\dfrac{2}{\dfrac{1}{[a_0,a_1,\dots,a_k]}+\dfrac{1}{ [a_0,a_1,\dots,a_k,a_{k+1}]}}=[a_0,a_1,\dots,a_k,2a_{k+1},a_k,\dots,a_1,a_0]\iff\] 

\[\dfrac{[0,a_0,a_1,\dots,a_k]+ [0,a_0,a_1,\dots,a_k,a_{k+1}]}{2}=[0,a_0,a_1,\dots,a_k,2a_{k+1},a_k,\dots,a_1,a_0],\]

\noindent which is true by the formula for the \textit{arithmetic mean}.

\textbf{Cotangent mean:}

Consider $x:=[\overline{a_0,a_1,\dots,a_k,2a_{k+1},a_k,\dots,a_1,a_0}]$, we need to prove that $x$ is equal to $\cot\left(\dfrac{\cot^{-1}[a_0,a_1,\dots,a_k]+\cot^{-1}[a_0,a_1,\dots,a_k,a_{k+1}]}{2}\right)$. Note that,

\[x=[a_0,a_1,\dots,a_k,2a_{k+1},a_k,\dots,a_1,a_0,x]=\]

\[=\dfrac{K[a_0,a_1,\dots,a_k,2a_{k+1},a_k,\dots,a_1,a_0,x]}{K[a_1,\dots,a_k,2a_{k+1},a_k,\dots,a_1,a_0,x]}=\]

\[=\dfrac{K[a_0,a_1,\dots,a_k,2a_{k+1},a_k,\dots,a_1,a_0]\cdot x+K[a_0,a_1,\dots,a_k,2a_{k+1},a_k,\dots,a_1]}{K[a_1,\dots,a_k,2a_{k+1},a_k,\dots,a_1,a_0]\cdot x+K[a_1,\dots,a_k,2a_{k+1},a_k,\dots,a_1]},\]

\noindent and by item $(a)$ of Lemma~\ref{propcont} and rearranging the terms,

\[\dfrac{x^2-1}{2x}=\dfrac{K[a_0,a_1,\dots,a_k,2a_{k+1},a_k,\dots,a_1,a_0]-K[a_1,\dots,a_k,2a_{k+1},a_k,\dots,a_1]}{2\cdot K[a_1,\dots,a_k,2a_{k+1},a_k,\dots,a_1,a_0]}=\]

\[=\dfrac{K[a_0,a_1,\dots,a_k]\cdot K[a_0,a_1,\dots,a_k,a_{k+1}]-K[a_1,\dots,a_k]\cdot K[a_1,\dots,a_k,a_{k+1}]}{K[a_0,a_1,\dots,a_k]\cdot K[a_1,\dots,a_k,a_{k+1}]+K[a_0,a_1,\dots,a_k,a_{k+1}]\cdot K[a_1,\dots,a_k]}=\]

\[=\dfrac{[a_0,a_1,\dots,a_k]\cdot [a_0,a_1,\dots,a_k,a_{k+1}]-1}{[a_0,a_1,\dots,a_k]+[a_0,a_1,\dots,a_k,a_{k+1}]},\]

\noindent where the second equality is due to items $(a)$ and $(b)$ of Lemma~\ref{propcontsym}. As a consequence,

\[\dfrac{x^2-1}{2x}=\dfrac{[a_0,a_1,\dots,a_k]\cdot [a_0,a_1,\dots,a_k,a_{k+1}]-1}{[a_0,a_1,\dots,a_k]+[a_0,a_1,\dots,a_k,a_{k+1}]}.\]

Using this last equation and the formula for the cotangent of the sum,

\[\cot(y+z)=\dfrac{\cot y\cdot \cot z-1}{\cot y+\cot z},\]

\noindent we obtain $2\cot^{-1}x=\cot^{-1}[a_0,a_1,\dots,a_k]+\cot^{-1}[a_0,a_1,\dots,a_k,a_{k+1}]$, from which the result follows.
\end{proof}

\section{Pell Equation and Factorization}\label{pell-factor}\

For a rational number $\dfrac{p}{q}>1$, which is not a square in $\Q$, it is well known that,

\[\sqrt{\dfrac{p}{q}}=[a_0/2,\overline{a_1,a_2,\dots,a_2,a_1,a_0}],\quad\quad \dfrac{1+\sqrt{\dfrac{p}{q}}}{2}=[(1+b_0)/2,\overline{b_1,b_2,\dots,b_2,b_1,b_0}],\]

\noindent where $a_j$ and $b_j$ are natural numbers for each $j$, $a_0$ is even and $b_0$ is odd, and with the central words, $(a_1,a_2,\dots,a_2,a_1)$ and $(b_1,b_2,\dots,b_2,b_1)$, palindromes. Assume from now on that the periods of these continued fractions are minimal and equal to $l$ and $m$, respectively. 

For the natural numbers, these continued fraction expansions are of particular interest, as they give rise to the fundamental solutions of Pell's equation. If $\dfrac{p}{q}$ is equal to a natural number $n$, then it is well known that,

\[K[a_0/2,a_1,a_2,\dots,a_2,a_1]^2-n\cdot K[a_1,a_2,\dots,a_2,a_1]^2=(-1)^l,\]

\[(2\cdot K[b_0/2,b_1,b_2,\dots,b_2,b_1])^2-n\cdot K[b_1,b_2,\dots,b_2,b_1]^2=4(-1)^m.\] 

We will focus on the first equation above, but similar considerations hold for the second equation as well. Recall that the period $l$ is odd if and only if the negative Pell equation $x^2-ny^2=-1$ has an integer solution. As is well known, if $n$ is divisible by a prime $p$ congruent to $3$ modulo $4$, then $x^2-ny^2=-1$ has no integer solutions because $-1$ is not a square modulo $p$. It follows that the period $l$ is even whenever $n$ is divisible by a prime congruent to $3$ modulo $4$.

Note that if $l=2k+2$ is even, then,

\[\sqrt{n}=[a_0/2,\overline{a_1,a_2,\dots,a_k,a_{k+1},a_k,\dots,a_2,a_1,a_0}].\]

\noindent whereas if $l=2k+1$ is odd, then,

\[\sqrt{n}=[a_0/2,\overline{a_1,a_2,\dots,a_k,a_k,\dots,a_2,a_1,a_0}].\]

In this first case, we can apply the \textit{geometric mean} formula in Theorem~\ref{gmcf} to obtain $\sqrt{n}=\sqrt{[a_0/2,a_1,\dots,a_k]\cdot [a_0/2,a_1,\dots,a_k,a_{k+1}/2]}$, which is equivalent to,

\[n=[a_0/2,a_1,\dots,a_k]\cdot [a_0/2,a_1,\dots,a_k,a_{k+1}/2].\]

We conclude that the natural number $n$ can be written as the product of near continued fractions. As we will now see, we can obtain a non-trivial factorization of $n$ from this product whenever $a_{k+1}$ is even. Observe that if $a_{k+1}$ is even, then $n$ is the product of two near continued fractions with natural numbers as entries.

Since $K[a_0/2,a_1,\dots,a_k]$ and $K[a_1,\dots,a_k]$, and $K[a_0/2,a_1,\dots,a_k,a_{k+1}/2]$ and $K[a_1,\dots,a_k,a_{k+1}/2]$ are pairs of coprime natural numbers, it follows that $n$ can be factored as the product of two natural numbers as,

\[n=\dfrac{K[a_0/2,a_1,\dots,a_k]}{K[a_1,\dots,a_k,a_{k+1}/2]}\cdot \dfrac{K[a_0/2,a_1,\dots,a_k,a_{k+1}/2]}{K[a_1,\dots,a_k]}.\]

The two factors of the natural number $n$ obtained this way are also coprime because $K[a_0/2,a_1,\dots,a_k]$ and $K[a_0/2,a_1,\dots,a_k,a_{k+1}/2]$ are coprime.

Furthermore, the factorization of $n$ obtained in this way is non-trivial. Indeed, if this were not the case, then, $K[a_0/2,a_1,\dots,a_k]=K[a_1,\dots,a_k,a_{k+1}/2]$, and from item $(d)$ in Lemma~\ref{propcont}, $K[a_0/2,a_1,\dots,a_k]\cdot K[a_1,\dots,a_k,a_{k+1}/2]-K[a_1,\dots,a_k]\cdot K[a_0/2,a_1,\dots,a_k,a_{k+1}/2]=(-1)^{k+1}$. As a consequence, $K[a_0/2,a_1,\dots,a_k]^2-n\cdot K[a_1,\dots,a_k]^2=(-1)^{k+1}$ is a nontrivial solution for the Pell equation of $n$ that is smaller than the fundamental solution, which is impossible. 

In conclusion, from the continued fraction of $\sqrt{n}$ it is sometimes possible to obtain a non-trivial factorization of $n$ as the product of two coprime natural numbers. In fact, as we have shown, this is possible whenever the period and central term of the continued fraction $\sqrt{n}$ are even. This simple factoring procedure is best shown in practice in the next example.

\begin{exm} \[\sqrt{741}=[27,\overline{4,1,1,13,18,13,1,1,4,54}]=\sqrt{[27,4,1,1,13]\cdot [27,4,1,1,13,9]}=\]
	
	\[=\sqrt{\dfrac{3321}{122}\cdot \dfrac{30134}{1107}}=\sqrt{\dfrac{3321}{1107}\cdot \dfrac{30134}{122}}=\sqrt{3\cdot 247}\implies 741=3\cdot 247.\]
\end{exm}

Of course, the fact that one can obtain a non-trivial factorization of a natural number from the continued fraction of its square root or a solution of the corresponding Pell equation is not new. This observation already appears in the work of van der Poorten and Walsh~\cite[p. 52, Thm. 1]{van1999note}, where they also mention a connection to the \textit{Lagrange equation}. Furthermore, using the theory of quadratic forms it is possible to obtain faster factorization algorithms, as can be seen in the work of Shanks~\cite{shanks1975analysis}.

However, the advantage of our approach is that the \textit{geometric mean} formula in Theorem~\ref{gmcf} is valid under more general conditions and immediately shows that there is a factorization associated with the continued fraction of the square root.

If the period length of the continued fraction of $\sqrt{n}$ is odd, then it was already known by Legendre~\cite[p. 59-60]{legendre1808essai} that $n$ can be written as a primitive sum of squares. This fact can also be proved using the \textit{geometric mean} formula in Theorem~\ref{gmcfcomplex}.

It is also possible, using the Theorems~\ref{gmcf} and~\ref{gmcfcomplex}, to provide conceptually simpler proofs of some other results about the Pell and Lagrange equations and continued fractions of square roots. In the next section, we illustrate this by analyzing the continued fraction of the square root of a prime odd power.

\section{Continued Fraction of $\sqrt{p^{2m-1}}$}\label{mordell}

In this section, we study, using the procedure of Section~\ref{pell-factor}, the continued fraction of $\sqrt{p^{2m-1}}$, where $p$ is a prime congruent to $3$ modulo $4$ and $m$ is a natural number. As a consequence, we also obtain, following Chakraborty and Saikia~\cite{chakraborty2019conjecture}, a restatement of a conjecture by Mordell~\cite{mordell1961pellian}.

First, note that since $p$ is congruent to $3$ modulo $4$, the period length of the continued fraction of $\sqrt{p^{2m-1}}$ is even. Write $\sqrt{p^{2m-1}}=[a_0/2,\overline{a_1,\dots,a_k,a_{k+1},a_k,\dots,a_1,a_0}]$, where $a_0=2\lfloor\sqrt{p^{2m-1}}\rfloor$ is even and the period is minimal. 

Note that $a_{k+1}$ is odd, because otherwise, by the procedure of Section~\ref{pell-factor}, one can obtain a non-trivial factorization of $p^{2m-1}$ as the product of two coprime natural numbers, which is impossible.

Define, $s_{0,k+1}:=2\cdot K[a_0/2,a_1,\dots,a_k,a_{k+1}/2]$, $s_0:=K[a_0/2,a_1,\dots,a_k]$, $s_{k+1}:=2\cdot K[a_1,\dots,a_k,a_{k+1}/2]$ and $s:=K[a_1,\dots,a_k]$, and observe that $s_{0,k+1},s_0,s_{k+1}$ and $s$ are natural numbers. By the \textit{geometric mean} formula in Theorem~\ref{gmcf} and item $(d)$ of Lemma~\ref{propcont},

\[p^{2m-1}=\dfrac{s_0\cdot s_{0,k+1}}{s\cdot s_{k+1}}=\dfrac{s_0}{s_{k+1}}\cdot \dfrac{s_{0,k+1}}{s},\quad\quad s_0\cdot s_{k+1}-s\cdot s_{0,k+1}=2(-1)^{k+1}.\]

We will prove that $\dfrac{s_0}{s_{k+1}}$ and $\dfrac{s_{0,k+1}}{s}$ are coprime natural numbers and $s_{0,k+1}$, $s_0$, $s_{k+1}$ and $s$ are all odd, and therefore $s_0=s_{k+1}$ and $\dfrac{s_{0,k+1}}{s}=p^{2m-1}$.

Observe that by the second equation above, both $\gcd(s_{0,k+1},s_{k+1})$ and $\gcd(s_{0,k+1},s_{0})$ are equal to $1$ or $2$. It is also clear that $\gcd(s_0,s)=1$. Now, using the definition of continuant we also have, $s_{0,k+1}=a_{k+1}\cdot s_0+2\cdot K[a_0/2,a_1,\dots,a_{k-1}]$ and $s_{k+1}=a_{k+1}\cdot s+2\cdot K[a_1,\dots,a_{k-1}]$, from which follows, since $a_{k+1}$ is odd, that both $s_{0,k+1}$ and $s_0$, and $s_{k+1}$ and $s$ are pairs of natural numbers with the same parity. 

As $\gcd(s_0,s)=1$, we obtain that the parity of the pair $s_{0,k+1}$ and $s_0$ is different from the parity of the pair $s_{k+1}$ and $s$. It follows that $\gcd(s_{0,k+1},s_{k+1})=1$. As a consequence, since $\gcd(s_{0,k+1},s_{k+1})$ and $\gcd(s_{0,k+1},s_{k+1})$ are equal to $1$ and the product $\dfrac{s_0}{s_{k+1}}\cdot \dfrac{s_{0,k+1}}{s}$ is a natural number, we obtain that $\dfrac{s_0}{s_{k+1}}$ and $\dfrac{s_{0,k+1}}{s}$ are natural numbers and $s_0$ and $s_{k+1}$ are odd.

Now, as $\gcd(s_{0,k+1},s_0)$ is either $1$ or $2$, $s_{0,k+1}$ and $s_0$ have the same parity and the product $\dfrac{s_0}{s_{k+1}}\cdot \dfrac{s_{0,k+1}}{s}$ is odd, it then follows that $\dfrac{s_0}{s_{k+1}}$ and $\dfrac{s_{0,k+1}}{s}$ are coprime natural numbers and $s_{0,k+1},s_0,s_{k+1}$ and $s$ are all odd. 

Finally, as $s\leq s_0,s_{k+1}<s_{0,k+1}$, we have that $\dfrac{s_0}{s_{k+1}}<\dfrac{s_{0,k+1}}{s}$ and therefore $\dfrac{s_0}{s_{k+1}}=1$ and $\dfrac{s_{0,k+1}}{s}=p^{2m-1}$, as we wanted to prove.

Using this last fact we can also obtain information about the continued fraction of $\dfrac{1+\sqrt{p^{2m-1}}}{2}$ and $a_{k+1}$. By the \text{geometric mean} formula in Theorem~\ref{gmcf},

\[\sqrt{p^{2m-1}}=\sqrt{\dfrac{s_{k+1}}{s_0}\cdot \dfrac{s_{0,k+1}}{s}}=\sqrt{\dfrac{s_{k+1}}{s}\cdot \dfrac{s_{0,k+1}}{s_0}}=\]

\[=2\sqrt{[a_{k+1}/2,a_k,\dots,a_1]\cdot [a_{k+1}/2,a_k,\dots,a_1,a_0/2]}=\]

\[=2\cdot [a_{k+1}/2,\overline{a_k,\dots,a_1,a_0,a_1,\dots,a_k,a_{k+1}}]\implies\]

\[\dfrac{1+\sqrt{p^{2m-1}}}{2}=[(1+a_{k+1})/2,\overline{a_k,\dots,a_1,a_0,a_1,\dots,a_k,a_{k+1}}],\]

\noindent from which follows that $a_{k+1}$ is either $\lfloor\sqrt{p^{2m-1}}\rfloor$ or $\lfloor\sqrt{p^{2m-1}}\rfloor-1$, whichever is odd.

Mordell's conjecture~\cite[p. 283]{mordell1961pellian} concerns a divisibility property of the fundamental solution of the Pell equation of $p$, where $p$ is a prime congruent to $3$ modulo $4$. This conjecture was inspired by a similar conjecture of Ankeny, Artin and Chowla~\cite[p. 480]{ankeny1952class}.

\begin{con}[Mordell~\cite{mordell1961pellian}]\label{Mordell} Let $(x_0, y_0)$ be the fundamental solution of the Pell equation $x^2-py^2=1$, where $p$ is a prime congruent to $3$ modulo $4$. Then $p$ does not divide $y_0$.
\end{con}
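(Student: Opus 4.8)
The plan is to run the even-period machinery of Section \ref{pell-factor} and reduce the statement to a single residual configuration of the continued fraction of $\sqrt p$. Since $p\equiv 3\pmod 4$, the equation $x^2-py^2=-1$ is unsolvable, so the period $l$ of $\sqrt p$ is even; write $l=2k+2$ and
\[\sqrt p=[a_0/2,\overline{a_1,\dots,a_k,a_{k+1},a_k,\dots,a_1,a_0}].\]
The fundamental solution is the convergent of index $l-1=2k+1$, so with the half-period continuants $u:=K[a_0/2,a_1,\dots,a_k]=p_k$ and $v:=K[a_1,\dots,a_k]=q_k$ I would set $y=K[a_1,\dots,a_k,a_{k+1},a_k,\dots,a_1]$. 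As in the excerpt, the geometric mean formula of Theorem \ref{gmcf} gives $p=[a_0/2,a_1,\dots,a_k]\cdot[a_0/2,a_1,\dots,a_k,a_{k+1}/2]$, and primality of $p$ forces $a_{k+1}$ to be odd, since an even $a_{k+1}$ produces the nontrivial integer factorization of Section \ref{pell-factor}.

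First I would compute $y$ by splitting the palindromic continuant at its central entry $a_{k+1}$ using items b), c), d) of Lemma \ref{propcont} together with the reversal symmetry a). Writing $u':=K[a_0/2,a_1,\dots,a_{k-1}]$ and $v':=K[a_1,\dots,a_{k-1}]$, and setting $M:=a_{k+1}v+2v'$, $N:=a_{k+1}u+2u'$, this yields $y=vM$ and $py=uN$, while item f) of Lemma \ref{propcont} gives the determinant identity $uM-vN=2(uv'-vu')=(-1)^{k+1}2$, with $\gcd(u,v)=1$.

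Now suppose, for contradiction, that $p\mid y$. Since $\gcd(u,v)=1$ and $uN=pvM$, I get $v\mid N$; writing $N=vN_0$ and eliminating $M$ between $uN_0=pM$ and $uM-v^2N_0=(-1)^{k+1}2$ produces the clean relation
\[N_0\,(u^2-pv^2)=(-1)^{k+1}2p.\]
By the classical formula $p_k^2-pq_k^2=(-1)^{k+1}Q_{k+1}$ for the complete quotient $\alpha_{k+1}=(P_{k+1}+\sqrt p)/Q_{k+1}$, this says $N_0\,Q_{k+1}=2p$, so $Q_{k+1}\in\{1,2,p,2p\}$. The value $Q_{k+1}=1$ is impossible because $Q_{k+1}=1\iff l\mid(k+1)$ while $0<k+1<l$; the values $Q_{k+1}\in\{p,2p\}$ are excluded by the classical norm relation $p=P_{k+1}^2+Q_kQ_{k+1}$, since then $P_{k+1}^2=p-Q_kQ_{k+1}\le 0$ cannot hold for a genuine complete quotient.

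This leaves the single case $Q_{k+1}=2$, which is where the real difficulty sits. Here $N_0=p$, $M=u$, and $\gcd(u,v)=1$ forces $p\mid v=q_k$, while the midpoint symmetry $P_{k+1}=P_{k+2}$ of the even period gives $a_{k+1}=P_{k+1}$ and $p=a_{k+1}^2+2Q_k$ with $a_{k+1}$ odd. In this configuration the fundamental solution is exactly $(p_k+q_k\sqrt p)^2/2$, so $y=p_kq_k$ and $p\mid y$ becomes equivalent to $p\mid q_k$ at the half-period. Thus the algebraic identities of Lemmas \ref{propcont} and \ref{propcontsym} reduce Mordell's conjecture to excluding a half-period convergent denominator divisible by $p$ when the central complete quotient has denominator $2$. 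The elementary quadratic-residue obstruction, namely that $u^2\equiv\pm 2\pmod p$ must then be solvable, is constrained by $p\bmod 8$ but does not eliminate this case, and I expect this residual divisibility to be the genuine obstacle, requiring arithmetic input beyond the continuant manipulations available here.
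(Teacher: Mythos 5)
The first thing to observe is that the statement you were asked to prove is labelled a \emph{conjecture} in the paper, and it is in fact open: the paper does not prove it. What the paper does, in the discussion following Conjecture \ref{Mordell}, is use the geometric-mean formula of Theorem \ref{gmcf} to rederive the known equivalence between $p \nmid y$ and $p \nmid K[a_1,\dots,a_k]$ (the half-period denominator $q_k$). Your proposal is, correctly and honestly, not a proof either: it is a reduction of the conjecture to exactly that same equivalent statement, and you are right that the residual case is the genuine open content. So there is no error of the form ``claims a proof where none exists''; just be sure not to present the write-up as a proof of the conjecture.

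Comparing routes: your identities $y=vM$, $py=uN$, $uM-vN=2(-1)^{k+1}$ are sound and are the paper's $y=s\,s_{k+1}$, $py=s_0\,s_{0,k+1}$, $s_0s_{k+1}-s\,s_{0,k+1}=2(-1)^{k+1}$ in different notation ($u=s_0$, $v=s$, $M=s_{k+1}$, $N=s_{0,k+1}$). Where you diverge is in importing the classical complete-quotient apparatus ($Q_{k+1}$, the relation $p_k^2-pq_k^2=(-1)^{k+1}Q_{k+1}$, the norm identity $p=P_{k+1}^2+Q_kQ_{k+1}$, the midpoint symmetry) to run the case analysis $Q_{k+1}\in\{1,2,p,2p\}$. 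This is correct but largely redundant: the paper's purely continuant-based argument (coprimality of the four quantities, the size comparison $s_{0,k+1}/s>s_0/s_{k+1}$, and primality of $p$) shows \emph{unconditionally} that $s_0=s_{k+1}$ and $s_{0,k+1}=ps$, i.e.\ that $u^2-pv^2=2(-1)^{k+1}$ and hence $Q_{k+1}=2$ always for $p\equiv 3 \bmod 4$ prime; the cases you exclude never arise, and the reduction does not require assuming $p\mid y$ first. Both routes land on $p\mid y\iff p\mid q_k$, which is all the paper claims; the obstacle you name at the end is not a gap in your reasoning but the open problem itself.
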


This conjecture can be rewritten in terms of continued fractions. Write $\sqrt{p}=[a_0/2,\overline{a_1,\dots,a_k,a_{k+1},a_k,\dots,a_1,a_0}]$, then Conjecture~\ref{Mordell} is easily seen as equivalent to the statement that $p$ does not divide $K[a_1,\dots,a_k,a_{k+1},a_k,\dots,a_1]$.

In their work, Chakraborty and Saikia~\cite[p. 2551, Thm. 4.1]{chakraborty2019conjecture} proved that Conjecture~\ref{Mordell} is equivalent to the statement that $p$ does not divide $K[a_1,\dots,a_k]$. This is an immediate consequence of our results presented above.

Indeed, note that, by item $(a)$ of Lemma~\ref{propcontsym}, $K[a_1,\dots,a_k,a_{k+1},a_k,\dots,a_1]$ equals $s_{k+1}\cdot s$, and that $s$ is, by definition, equal to $K[a_1,\dots,a_k]$. Therefore, to obtain the equivalence, it is enough to verify that $s_{k+1}$ is not divisible by $p$. But, as we showed, $p=\dfrac{s_{0,k+1}}{s}$ and $\gcd(s_{0,k+1},s_{k+1})=1$, and therefore $s_{k+1}$ is not divisible by $p$.


\section*{Acknowledgements}

This work is partially based on my Ph.D. Thesis at IMPA, Brazil. The author acknowledges the support from CAPES-Brazil scholarship grant.


\IfFileExists{references.bib}
  {\bibliography{references}}
  {\bibliography{../references}}


\end{document}